\newtheorem{lemma}{Lemma}[section]
\newtheorem{corollary}[lemma]{Corollary}
\newtheorem{theorem}[lemma]{Theorem}
\newcommand{\Addresses}{{
		\bigskip
		\footnotesize
		
		\textsc{Department of Mathematics, University at Buffalo, Buffalo,
			NY 14260-2900, USA}\par\nopagebreak \textit{E-mail address}:
		\texttt{gagecosg@buffalo.edu}
	}}
\title{The Image of the Gassner Representation of the Pure Braid Subgroup has Pairwise Free Generators}
\author{Gage Makenzie Cosgrove}
\begin{document}
	\maketitle
	\begin{abstract}
		\noindent While much is known about the faithfulness of the Burau representation, the problem remains open for the Gassner representation for every $B_n$ with $n\geq 4$. We first find the definition of the Colored-Burau representation in \cite{aagl} and we show that this is equivalent, when restricted to the pure braid subgroup, to the Gassner representation. The methods of Abdulrahim \cite{phdthesis} and Knudson \cite{knudson} require analysis within the lower central series of a free subgroup of the pure braid group. However, Lipschutz's work \cite{lip} gives a method for analyzing the Gassner representation and the Colored-Burau structure reduces this analysis to basic linear algebra. This culminates in a proof that the image of the Gassner representation is generated by pair-wise free elements. We then discuss applications of this result to the faithfulness problem of the representation.
	\end{abstract}

\makeatletter
\renewcommand\subsubsection{\@startsection{subsubsection}{3}{\z@}%
	{-3.25ex\@plus -1ex \@minus -.2ex}%
	{-1.5ex \@plus -.2ex}
	{\normalfont\normalsize\bfseries}}

\subsubsection*{Keywords:}
Braid groups, pure braids, Gassner representation, Ping-Pong Lemma

\makeatother 

\subsection*{Acknowledgments}
Many thanks to my advisor, Alexandru Chirvasitu, for his insights. Special thanks to Arya Vadnere for his suggestion to consider the Ping-Pong Lemma, and also to Johanna Mangahas for sharing her expertise on the subject.

\section{Introduction}
The faithfulness of the Burau representation has been well-studied. It is known that the representation is faithful for $n=3$ and unfaithful for $n\geq 5$ \cite{big-bur}. The case $n=4$ is still open, but several attempts have been made to answer the question. Beridze and Traczyk showed that the faithfulness depends on the restriction to a free subgroup generated by two elements and that the cubes of their images generate a free group \cite{ber-tra}. Their proof was topological in nature, however and, more recently, Beridze, Bigelow, and Traczyk gave an alternate proof using the Ping-Pong Lemma that the same holds modulo $p$ \cite{ber-big-tra}. This has subsequently been improved by Abdulrahim and Jinan in \cite{abdul-jin} to show the squares generate a free group.
\\
\\
When it comes to the Gassner representation, it is not known for any $n\geq 4$ whether the representation is faithful. In fact, the faithfulness of the Gassner representation is dependent only upon its faithfulness when restricted to free subgroups of the pure braid group. For example, Kundson located the kernel within the lower central series of a certain free subgroup of the pure braid group \cite{knudson}. The Gassner representation is originally constructed as a Magnus representation using Fox derivatives and the computations can be seen in \cite{phdthesis}. These computations are strenuous and the resulting matrices require additional rewriting to make use of them.
\\
\\
In this paper, we prefer to use the Colored-Burau representation introduced in \cite{aagl} over Fox derivatives. Rather than obtaining a representation via homology computations as in the ordinary Burau representation, the Colored-Burau representation is simply an imposition of algebraic structure. In \Cref{sec:cb}, we introduce the Colored-Burau representation, compute the images of the pure braid generators found in \cite{Birm}, and show these align with the Gassner matrices. In \Cref{sec:lipschutz}, we use the preceding computations to provide a short linear-algebraic proof that the faithfulness of the Gassner representation depends only upon its restriction to a free subgroup. Finally, in \Cref{sec:main}, we show, following the idea in \cite{lip}, that the Gassner matrices for the generators of the free subgroup are pairwise free.

\section{The Colored-Burau Representation}\label{sec:cb}
Here we introduce the Colored-Burau representation from \cite{aagl}. This representation is a generalization of the famous Burau representation in that it is defined for the entire braid group. However, we use this represenation to perform explicit computations on the pure braid subgroup. These computaions will show that the restriction of the Colored-Burau representation to the pure braid subgroup is precisely the Gassner representation. In \Cref{sec:lipschutz} and \Cref{sec:main} we will leverage this to write short linear-algebraic proofs about the Gassner representation.

\subsection{A Convention}\label{subsec:convention}
Fix an integer $n\geq 2$. For $1\le i<j\le n$ and a scalar or variable $t$ we denote by 
$t_{i\to j}$ the column vector (of implicit size $n$) with $t$ in positions $i$ up to 
and including $j-1$ and $0$ elsewhere. On the other hand, given a length-$n$ column 
vector ${\bf v}$, we write $c_s({\bf v})$ for the $n\times n$ matrix whose $s$-column is 
${\bf v}$ (and all of whose other entries vanish). So, we could write

\begin{equation*}
c_2([1,2,3]^T)=
\begin{bmatrix}
0&1&0\\
0&2&0\\
0&3&0
\end{bmatrix}.
\end{equation*}
This example also illustrate the convention that we number rows and columns starting 
at $1$ rather than $0$.

\subsection{The Colored-Burau Group}
Fix an integer $n\geq 2$. Denote by $B_n$ the Braid group on $n$ strands.
Let $T=(t_1,\hdots t_n)$ be a tuple of formal variables. Denote by
$L_n:=\mathbb{Z}[t_1^{\pm 1},\hdots, t_n^{\pm 1}]$, the integral Laurent polynomial ring in $n$ 
variables. 
\\
\\
The Colored-Burau group is the outer semi-direct product 
$$G=GL_n(L_n)\rtimes S_n$$
of the symmetric group on $n$ symbols acting on $L_n$. The action is by permutation of 
the variables. In particular, there is a homomorphism $\pi:B_n\to S_n$ given by 
$s_i:=\pi(\sigma_i)=(i,i+1)$, so that a braid $\beta\in B_n$ may act on an element of $M\in GL_n(L_n)$ as
$${}^{\beta}M:={}^{\pi(\beta)}M$$
Accordingly, the multiplication in $G$ is
$$(M,s)\star (M',s'):=(M{}^sM',ss')$$
The Colored-Burau representation, therefore, is the map
$$CB:B_n\to G$$
given by 
$$\sigma_i\mapsto (M_i,s_i)$$
where
$$M_1=\begin{bmatrix}
-t_1 & 1 & &\\
& 1 & & \\
& & \ddots &\\
& & & 1
\end{bmatrix} 
\qquad \text{and, when $i\neq 1$,} \qquad 
M_i=
\begin{bmatrix}
1 & & &\\
& \ddots & \\
& & t_i& -t_i & 1 & &\\
& & & & & \ddots &\\
& & & & & & 1
\end{bmatrix}$$
which is the identity except for the $i^{\text{th}}$ row where it has in succession 
$t_i, -t_i, 1$ with $-t_i$ on the diagonal. Using the convention from 
\Cref{subsec:convention}, we can write:
$$\sigma_i \mapsto I_n+c_{i-1}((t_i)_{i-1\to i})+c_i((1-t_i)_{i\to i+1})+c_{i+1}(1_{i+1\to i+2})$$
(when $i=1$, the $c_{i-1}$ term vanishes because we index columns from $1$, not $0$). For convenience, given a braid $\beta\in B_n$, we will refer to the matrix component of $CB(\beta)$ as $cb(\beta)$.
\\
\\
One can check that $CB$ is a homomorphism, in particular, it satisfies the braid relations, so it is well-defined. Finally, note that applying the quotient $t_i\sim t$ that identifies all $t_i$ with a single indeterminant will recover the Burau representation of $B_n$.

\subsection{Matrices for Pure Braid Generators}\label{subsec:cb-xpl}
Among other sources, Birman shows in \cite{Birm} the Pure Braid subgroup is generated by
	$$A_{i,j}=\sigma_i\sigma_{i+1}\cdots \sigma_{j-2}\sigma_{j-1}^2\sigma_{j-1}^{-1}\cdots \sigma_i^{-1}$$
together with relations that we do not list here. Moreover, Birman shows there is a split exact 
sequence
	$$1\to F_{n-1}\to P_n\twoheadrightarrow P_{n-1}\to 1$$
such that $F_{n-1}\cong \langle A_{1,j}\rangle_{j=2}^n$. As such, we describe explicitly the matrices $cb(A_{i,j})$ and their inverses. The reader can check that the formula derived here agrees precisely with Abdulrahim's result (see \cite[Proposition 3.3.3]{phdthesis}), however, our derivation is via computations in the Colored-Burau group.

\begin{lemma}\label{lem:cb-xpl}
	Let $n\ge 2$ and
	\begin{equation*}
	A_{i,j},\ 1\le i<j\le n
	\end{equation*}
	be a pure braid generator for some $i,j$. The matrix
	\begin{equation*}
	cb(A_{i,j})\in GL_n(L_n)  
	\end{equation*}
	is equal to
	\begin{eqnarray*}
		I_n &+\quad c_{i-1}((-t_it_j+t_i)_{i\to j}) &+\quad c_i((t_j-1)_{i\to j})\\
		&+\quad c_{j-1}((t_it_j-t_j)_{i\to j}) &+\quad c_{j}((-t_i+1)_{i\to j}),
	\end{eqnarray*}
	with the following caveat: When $i=1$ the term $c_{i-1}$ is absent.
\end{lemma}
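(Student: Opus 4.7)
The plan is to compute $CB(A_{i,j})$ directly in the semi-direct product $G = GL_n(L_n) \rtimes S_n$, then read off the matrix component. Write $A_{i,j} = W \sigma_{j-1}^2 W^{-1}$ with $W = \sigma_i \sigma_{i+1} \cdots \sigma_{j-2}$. Because $A_{i,j}$ is a pure braid and $\sigma_{j-1}^2$ has trivial image in $S_n$, the permutation data must cancel at the end, so only the matrix entries require careful tracking.

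First I would build up $CB(W)$ by induction on the length of the initial subword $\sigma_i \sigma_{i+1} \cdots \sigma_{i+k}$. Each step $(M,s) \star (cb(\sigma_{i+k+1}), s_{i+k+1})$ requires the twist ${}^{s} cb(\sigma_{i+k+1})$, and since $cb(\sigma_{i+k+1})$ involves only the single variable $t_{i+k+1}$, this twist amounts to a substitution of one indeterminant. The matrices $cb(\sigma_\ell)$ are identity outside row $\ell$, which keeps each product sparse and readable column by column; together these observations should produce a compact closed form for the matrix part of $CB(W)$.

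Next I would compute $CB(\sigma_{j-1}^2)$: the permutation vanishes and the matrix part is $cb(\sigma_{j-1}) \cdot {}^{s_{j-1}} cb(\sigma_{j-1})$, a short direct calculation in which the second factor has $t_{j-1}$ and $t_j$ swapped. Finally, $CB(W^{-1}) = CB(W)^{-1}$ is obtained by inversion in $G$, and the product $CB(W) \star CB(\sigma_{j-1}^2) \star CB(W^{-1})$ yields the matrix $cb(A_{i,j})$ paired with the trivial permutation. To verify agreement with the stated formula, one compares column by column: only columns $i-1, i, j-1, j$ are altered, and only in rows $i, \ldots, j-1$, so the check is short. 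The caveat for $i=1$ corresponds exactly to the fact that $W$ then starts with $\sigma_1$, whose image has no contribution to column $0$.

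The main obstacle will be the bookkeeping of variable permutations: each new factor picks up a twist by the permutation accumulated so far, and an error in composition order propagates through the entire calculation. Fixing at the outset the convention ${}^s M = M(t_{s(1)}, \ldots, t_{s(n)})$ consistent with the semi-direct product multiplication, and noting that the accumulated permutation through $\sigma_i \cdots \sigma_{i+k}$ permutes only indices in $\{i, \ldots, i+k+1\}$, should keep the calculation essentially mechanical and reduce it to the kind of elementary linear algebra promised in the introduction.
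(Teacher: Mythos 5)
Your route is viable but genuinely different from the paper's. You decompose $A_{i,j}=W\sigma_{j-1}^2W^{-1}$ with $W=\sigma_i\cdots\sigma_{j-2}$, compute $CB(W)$ in closed form, and then perform a single conjugation of $CB(\sigma_{j-1}^2)$. The paper instead runs a double induction (on $n$, then on $\ell=n-i$) built on the one-step recursion $A_{i-1,n}=\sigma_{i-1}A_{i,n}\sigma_{i-1}^{-1}$: the inductive hypothesis is the target formula itself, so each step is only a conjugation of an already-known matrix by a single generator, i.e.\ two multiplications against sparse matrices, and one checks that the claimed form is stable under that conjugation. What the paper's approach buys is that it never needs a closed form for the matrix part of $CB(W)$; what yours buys is a more transparent global picture (one conjugation, permutation data manifestly cancelling) at the cost of an extra auxiliary lemma.

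That auxiliary lemma is also where your proposal stops short of being a proof: you assert that the sparsity of the $cb(\sigma_\ell)$ ``should produce a compact closed form for the matrix part of $CB(W)$,'' but you neither state nor prove it, and the subsequent three-fold product $CB(W)\star CB(\sigma_{j-1}^2)\star CB(W)^{-1}$ — the step where the formula with its four column-blocks $c_{i-1},c_i,c_{j-1},c_j$ actually has to emerge — is left as ``the check is short.'' The matrix part of $CB(W)$ is not as tame as the individual generators (already for $W=\sigma_1\sigma_2$ it is dense in rows $1$ and $2$), so this is real work, not a formality. Your identification of the variable-twist bookkeeping as the main hazard is apt; fix the left-action convention so that ${}^{ss'}M={}^{s}({}^{s'}M)$ matches $(M,s)\star(M',s')=(M\,{}^sM',ss')$, then carry out the two deferred computations to complete the argument.
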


\begin{proof}
	When $n=2$, we need only check $A_{1,2}$, which we obtain as the result of a single
	group multiplication:
	\begin{align*}
		CB(A_{1,2})&=CB(\sigma_1^2)\\
		&=
		\left(
		\begin{bmatrix}
			-t_1 & 1\\
			0 & 1
		\end{bmatrix},(1,2)
		\right)
		\star
		\left(
		\begin{bmatrix}
			-t_1 & 1\\
			0 & 1
		\end{bmatrix},(1,2)
		\right)\\
		&=
		\left(
		\begin{bmatrix}
			-t_1 & 1\\
			0 & 1
		\end{bmatrix}
		\cdot 
		\begin{bmatrix}
			-t_2 & 1\\
			0 & 1
		\end{bmatrix},(1,2)\cdot (1,2)
		\right)\\
		&=
		\left(
		\begin{bmatrix}
			t_1t_2 & 1-t_1\\
			0 & 1
		\end{bmatrix},e
		\right)
	\end{align*}
	and the matrix component is equal to
	$$
	\underbrace{\begin{bmatrix}
			1 & 0 \\
			0 & 1
	\end{bmatrix}}_{I_2}
	+
	\underbrace{\begin{bmatrix}
			t_2-1 & 0 \\
			0 & 0
	\end{bmatrix}}_{c_1((t_2-1)_{1\to 2})}
	+
	\underbrace{\begin{bmatrix}
			t_1t_2-t_2 & 0\\
			0 & 0
	\end{bmatrix}}_{c_1((t_1t_2-t_2)_{1\to 2})}
	+
	\underbrace{\begin{bmatrix}
			0 & 1-t_1\\
			0 & 0
	\end{bmatrix}}_{c_2((1-t_1)_{1\to 2})}
	=
	\begin{bmatrix}
		t_1t_2 & 1-t_1\\
		0 & 1
	\end{bmatrix}
	$$
	as claimed.
	\\
	\\
	Now suppose the result holds for some $B_k$ with $k>2$ and for all 
	$1\leq i<j \leq k$. Set $n=k+1$. By our inductive hypothesis, it suffices to prove 
	the formula for $j=n$. Thus, we further induct on $\ell=n-i$. When $\ell=1$, we 
	obtain the result immediately by one group multiplication:
	\begin{align*}
		CB(A_{n-1,n})&=CB(\sigma_{n-1}^2)=CB(\sigma_{n-1})\star CB(\sigma_{n-1})\\
		&=
		\left(
		\begin{bmatrix}
			I_{n-3}&\\
			& 1 & 0 & 0\\
			& t_{n-1} & -t_{n-1} & 1\\
			& 0& 0& 1
		\end{bmatrix},(n-1,n)
		\right)
		\star
		\left(
		\begin{bmatrix}
			I_{n-3}&\\
			& 1 & 0 &0\\
			& t_{n-1} & -t_{n-1}&1\\
			& 0& 0& 1
		\end{bmatrix},(n-1,n)
		\right)\\
		&=
		\left(
		\begin{bmatrix}
			I_{n-3}&\\
			& 1 & 0&0\\
			& t_{n-1} & -t_{n-1}&1\\
			& 0& 0& 1
		\end{bmatrix}
		\cdot
		\begin{bmatrix}
			I_{n-3}&\\
			& 1 & 0&0\\
			& t_n & -t_n&1\\
			& 0& 0& 1
		\end{bmatrix},(n-1,n)\cdot(n-1,n)
		\right)\\
		&=
		\left(
		\begin{bmatrix}
			I_{n-3}&\\
			& 1 & 0&0\\
			& t_{n-1}-t_{n-1}t_n & t_{n-1}t_n&1-t_{n-1}\\
			& 0& 0& 1
		\end{bmatrix}, e
		\right)
	\end{align*}
	and the matrix component is equal
	\begin{align*}
		&\underbrace{\begin{bmatrix}
				I_{n-3} & 0 & 0 & 0\\
				0 & 1 & 0 & 0 \\
				0 & 0 & 1 & 0 \\
				0 & 0 & 0 & 1
		\end{bmatrix}}_{I_n}
		+\underbrace{\begin{bmatrix}
				0 & 0 & 0 & 0\\
				0 & 0 & 0 & 0 \\
				0 & -t_{n-1}t_n+t_{n-1} &0 & 0 \\
				0 & 0 & 0 & 0
		\end{bmatrix}}_{c_{n-2}((-t_{n-1}t_n+t_{n-1})_{n-1\to n})}
		+\underbrace{\begin{bmatrix}
				0 & 0 & 0 & 0\\
				0 & 0 & 0 & 0 \\
				0 & 0 & t_n-1 & 0 \\
				0 & 0 & 0 & 0
		\end{bmatrix}}_{c_{n-1}((t_n-1)_{n-1\to n})}\\
		&+\underbrace{\begin{bmatrix}
				0 & 0 & 0 & 0\\
				0 & 0 & 0 & 0 \\
				0 & 0 & t_nt_{n-1}-t_n & 0 \\
				0 & 0 & 0 & 0
		\end{bmatrix}}_{c_{n-1}((t_nt_{n-1}-t_n)_{n-1\to n})}
		+\underbrace{\begin{bmatrix}
				0 & 0 & 0 & 0\\
				0 & 0 & 0 & 0 \\
				0 & 0 & 0 & -t_{n-1}+1 \\
				0 & 0 & 0 & 0
		\end{bmatrix}}_{c_n((-t_{n-1}+1)_{n-1\to n})}\\
		&=
		\begin{bmatrix}
			I_{n-3}&\\
			& 1 & 0&0\\
			& t_{n-1}-t_{n-1}t_n & t_{n-1}t_n&1-t_{n-1}\\
			& 0& 0& 1
		\end{bmatrix}
	\end{align*}
	as claimed.
	\\
	\\
	Now suppose our formula holds for some $2\leq \ell \leq n-2$ 
	(we treat the case $i=1$ separately). Since 
	$$A_{i-1,n}=\sigma_{i-1}A_{i,n}\sigma_{i-1}^{-1},$$ 
	we perform two group multiplications.
	Firstly, since $A_{i,n}$ is a pure braid, there is no permuting of the variables in 
	$cb(\sigma_{i-1}^{-1})$. Thus, we multiply
	\begin{align*}
		cb(A_{i,n})cb(\sigma_{i-1}^{-1})&=
		\begin{bmatrix}
			I_{i-2}&\\
			&1 & 0 & 0 & 0&0\\
			&t_i(1-t_n)&t_n & 0 & t_it_n-t_n&1-t_i\\
			&t_i(1-t_n)&t_n-1 & I_{\ell-2} & t_it_n-t_n&1-t_i\\
			&\vdots &\vdots & & \vdots&\vdots \\
			&t_i(1-t_n)&t_n-1 & 0 & 1+t_it_n-t_n &1-t_i\\
			& 0& 0& 0& 0& 1	
		\end{bmatrix}\cdot
		\begin{bmatrix}
			I_{i-3} &  &  &  \\
			& 1 & 0 & 0\\
			& 1& \frac{-1}{t_i} & \frac{1}{t_i}\\
			& 0 & 0 & 1 \\
			& & & & I_{n-i}
		\end{bmatrix}
	\end{align*}
	The product is non-unital in precisely the columns $i-1,i,i+1,$ and $n-1$ with the entries being
	$$\begin{bmatrix}
		I_{i-3}& \\
		&1 & 0 & 0  & 0 & 0 & 0\\
		&1& \frac{-1}{t_i} & \frac{1}{t_i} & 0 & 0 &0\\\\
		&t_i(1-t_n)&t_n-1 & 1 & 0 &t_it_n-t_n&1-t_i\\
		&\vdots & \vdots & I_{\ell-2}&\vdots & \vdots &\vdots \\
		&t_i(1-t_n)&t_n-1 & 0 & 0 & 1+t_it_n-t_n &1-t_i\\
		&0& 0& 0& 0& 0& 1
	\end{bmatrix}.$$
	The second group multiplication has the effect of permuting every occurrence of 
	$t_i$ in the above to $t_{i-1}$ and subsequently multiplying by $cb(\sigma_{i-1})$, 
	hence we obtain
	$${\small\begin{bmatrix}
			I_{i-3}\\
			&1& 0 & 0 \\
			&t_i & -t_i & 1\\
			&0 & 0 & 1\\
			& & & & I_{n-i}
	\end{bmatrix}}\cdot
	\begin{bmatrix}
		I_{i-3}& \\
		&1 & 0 & 0  & 0 & 0 & 0\\
		&1& \frac{-1}{t_{i-1}} & \frac{1}{t_{i-1}} & 0 & 0 &0\\\\
		&t_{i-1}(1-t_n)&t_n-1 & 1 & 0 &t_{i-1}t_n-t_n&1-t_{i-1}\\
		&\vdots & \vdots & I_{\ell-2}& \vdots & \vdots &\vdots \\
		&t_{i-1}(1-t_n)&t_n-1 & 0 & 0 & 1+t_{i-1}t_n-t_n &1-t_{i-1}\\
		&0& 0& 0& 0& 0& 1
	\end{bmatrix},$$
	which is non-unital in precisely columns $i-1,i,n-1,$ and $n$ with the entries being
	$$\begin{bmatrix}
		I_{i-2}&\\
		&1 & 0 & 0 & 0&0\\
		&t_{i-1}(1-t_n)&t_n & 0 &t_{i-1}t_n-t_n&1-t_{i-1}\\
		&t_{i-1}(1-t_n)&t_n-1 & I_{\ell-2} & t_{i-1}t_n-t_n&1-t_{i-1}\\
		&\vdots &\vdots & & \vdots &\vdots \\
		&t_{i-1}(1-t_n)&t_n-1 & 0 & 1+t_{i-1}t_n-t_n & 1-t_{i-1}\\
		&0 & 0 & 0 & 0 & 1
	\end{bmatrix}$$
	and this agrees with the proposed formula, as the reader can verify.
	\\
	\\
	Finally, we handle the case $i=1$. By induction, we have 
	$$cb(A_{2,n})=
	\begin{bmatrix}
		1 & 0 & 0 & 0&0\\
		t_2(1-t_n)&t_n & 0 &t_2t_n-t_n&1-t_2\\
		t_2(1-t_n)&t_n-1 & I_{n-4} & t_2t_n-t_n&1-t_2\\
		\vdots &\vdots & & \vdots&\vdots \\
		t_2(1-t_n)&t_n-1 & 0 & 1+t_2t_n-t_n&1-t_2\\
		0 & 0 & 0 & 0& 1
	\end{bmatrix}
	$$
	and, since $A_{1,n}=\sigma_1A_{2,n}\sigma_1^{-1}$, we obtain 
	$$cb(A_{1,n})=
	\begin{bmatrix}
		t_n & 0 &t_1t_n-t_n&1-t_1\\
		t_n-1 & I_{n-3} & t_1t_n-t_n&1-t_1\\
		\vdots & & \vdots & \vdots\\
		t_n-1 & 0 & 1+t_1t_n-t_n&1-t_1\\
		0 & 0 & 0 & 1
	\end{bmatrix},
	$$
	as the reader can verify. Finally, we note that this agrees
	with the proposed formula.
\end{proof}

\begin{corollary}\label{cor:inv}
	Using the same notation as in \Cref{thm:cb-xpl}, the matrix
	\begin{equation*}
		cb(A_{ij})^{-1}\in GL_n(L_n)  
	\end{equation*}
	is equal to
	$$
	I_n +c_{i-1}\left(\left(\frac{t_j-1}{t_j}\right)_{i\to j}\right)
	+ c_i\left(\left(\frac{1-t_j}{t_it_j}\right)_{i\to j}\right)
	+ c_{j-1}\left(\left(\frac{1-t_i}{t_i}\right)_{i\to j}\right) 
	+ c_{j}\left(\left(\frac{t_i-1}{t_it_j}\right)_{i\to j}\right),
	$$
	with the following caveat: When $i=1$ the first term $c_{i-1}$ is absent.
\end{corollary}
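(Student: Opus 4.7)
The plan is to verify the formula by direct matrix multiplication: show that the proposed candidate $M$ satisfies $M \cdot cb(A_{ij}) = I_n$. Since $A_{ij} \in P_n$ has trivial image under $\pi$, inversion in the colored-Burau group $G$ reduces to ordinary matrix inversion in $GL_n(L_n)$, so it suffices to verify this linear-algebraic identity.

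The structural observation that makes the check tractable is that, by \Cref{lem:cb-xpl}, both $cb(A_{ij})$ and the proposed $M$ have the form $I_n + \Delta$, where $\Delta$ is supported only in columns $i-1, i, j-1, j$, and moreover each such column is itself a vector of the form $v_{i \to j}$ for a single scalar $v \in L_n$. Writing $cb(A_{ij}) = I_n + N$ and $M = I_n + N'$, the identity $M \cdot cb(A_{ij}) = I_n$ becomes
$$N + N' + N'N = 0.$$
The matrix $N'N$ is again supported only in columns $i-1, i, j-1, j$, because $N$ is; and each of its columns, being $N'$ applied to a vector of the form $v_{i \to j}$, simplifies to $v$ times the sum of the $i$-th and $(j-1)$-st columns of $N'$. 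Thus the identity to verify collapses to four short scalar computations in $t_i^{\pm 1}$ and $t_j^{\pm 1}$, one per relevant column.

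The only caveat is the case $i = 1$, in which the term $c_{i-1}$ is absent from both matrices; adopting the same convention as in \Cref{lem:cb-xpl} that $c_0(\cdot) = 0$, the column-by-column verification above specializes correctly. I expect no conceptual difficulty: once the four-column reduction is in place, checking $N + N' + N'N = 0$ is mechanical, and the main nuisance is simply bookkeeping the sign and fraction patterns across the four columns. An alternative, more laborious route would be to mirror the double induction of \Cref{lem:cb-xpl}, with base case $cb(\sigma_{n-1}^{-2})$ and inductive step $A_{i-1,n}^{-1} = \sigma_{i-1} A_{i,n}^{-1} \sigma_{i-1}^{-1}$; but this duplicates work the lemma has already done, whereas the direct verification exploits that work cleanly.
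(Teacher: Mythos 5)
The paper states this corollary without proof, and your direct verification --- reducing $M\cdot cb(A_{ij})=I_n$ to $N+N'+N'N=0$ and then to four scalar identities, via the observation that both $N$ and $N'$ are supported in columns $i-1,i,j-1,j$ with each nonzero column of the form $v_{i\to j}$ --- is correct and is surely the intended argument; the computation closes because $1+\tfrac{1-t_j}{t_it_j}+\tfrac{1-t_i}{t_i}=\tfrac{1}{t_it_j}$, so each column condition becomes (value in $N$)$/(t_it_j)$ plus the corresponding value in $N'$, which vanishes in all four cases. The one point to state carefully is the degenerate case $j=i+1$, where columns $i$ and $j-1$ coincide: ``$v$ times the sum of the $i$-th and $(j-1)$-st columns of $N'$'' should be read as $v\sum_{k=i}^{j-1}\mathrm{col}_k(N')$ (a single term there, not a literal two-term sum), and the verification goes through unchanged because that single column already carries the combined entry $b'+c'$.
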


\begin{corollary}\label{cor:det}
	Using the same notation as in \Cref{lem:cb-xpl}, taking determinants gives
		$$\det(cb(A_{i,j}))=t_it_j.$$
\end{corollary}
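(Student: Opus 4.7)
The plan is to read the determinant directly off the explicit form of $cb(A_{i,j})$ given in \Cref{lem:cb-xpl}, exploiting the fact that the corrections to $I_n$ are supported in very few rows and columns. The key observation is that the only rows of $cb(A_{i,j})$ that differ from those of $I_n$ are rows $i,i+1,\ldots,j-1$, because each correction term $c_s\bigl((\cdot)_{i\to j}\bigr)$ places nonzero entries only in those rows. Every other row is the corresponding standard-basis row, so expanding the determinant along rows $1,\ldots,i-1,j,\ldots,n$ contributes a factor of $1$ apiece and reduces the computation to the $(j-i)\times(j-i)$ submatrix $M$ obtained by restricting to rows and columns $i,\ldots,j-1$.

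Next, I would inspect $M$ column by column. Its first column, inherited from column $i$ of $cb(A_{i,j})$, equals $(t_j,t_j-1,\ldots,t_j-1)^T$; its last column, inherited from column $j-1$, equals $(t_it_j-t_j,\ldots,t_it_j-t_j,\,1+t_it_j-t_j)^T$; and the intermediate columns, inherited from columns $i+1,\ldots,j-2$, are the standard basis vectors $e_2,\ldots,e_{j-i-1}$ of $M$ (the $c_{i-1}$ and $c_j$ contributions live in columns outside $\{i,\ldots,j-1\}$ and have already been erased). Expanding $M$ along each intermediate standard-basis column collapses it to the $2\times 2$ matrix
\[
\begin{bmatrix} t_j & t_it_j - t_j \\ t_j-1 & 1+t_it_j-t_j \end{bmatrix},
\]
whose determinant is $t_j(1+t_it_j-t_j)-(t_j-1)(t_it_j-t_j)=t_it_j$ after routine cancellation.

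The two degenerate situations just need a quick sanity check. When $j-i=1$ the contributions of $c_i$ and $c_{j-1}$ collide in the same column, so $M$ is the $1\times 1$ matrix with entry $1+(t_j-1)+(t_it_j-t_j)=t_it_j$; when $i=1$ the absent $c_{i-1}$ term lies outside the range $\{i,\ldots,j-1\}$ anyway, and the same reduction goes through unchanged. I expect no real obstacles here: the only delicate point is the bookkeeping of which four columns actually deviate from the identity (versus which get absorbed into the identity-row expansion), so that nothing is over- or under-counted when assembling $M$.
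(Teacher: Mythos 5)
Your proof is correct, and it fills in exactly the computation the paper leaves implicit: \Cref{cor:det} is stated without proof as a direct consequence of \Cref{lem:cb-xpl}, and your reduction to the submatrix on rows and columns $i,\ldots,j-1$, followed by the collapse to the $2\times 2$ block with determinant $t_j(1+t_it_j-t_j)-(t_j-1)(t_it_j-t_j)=t_it_j$, checks out, including the degenerate cases $j-i=1$ and $i=1$.
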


\section{The Faithfulness of the Gassner Representation}\label{sec:lipschutz}
The main result of this section is that the faithfulness of the Gassner representation is determined completely by its faithfulness on the free subgroup $F_{n-1}\cong \langle A_{1,j}\rangle_{j=2}^n$. The following result due to Long has provided for such reductions in many other contexts. In fact, \cite{cp} uses a similar argument on the level of Lie Algebras.

\begin{lemma}[Thereom 2.2 in \cite{lng}]\label{lem:faith-redux}
	Let $A$ be a braid group and $\rho:A\to M$ be a representation. Furthermore, suppose $\rho$
	is faithful on $Z(A)$. Then $\rho$ is faithful on $A$ if and only if it is faithful on 
	$B\lhd A$ for some non-trivial, non-central, normal subgroup $B$.
\end{lemma}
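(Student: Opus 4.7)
The forward direction is immediate: any faithful representation of $A$ restricts faithfully to every subgroup, in particular to $B$. So I focus on the backward direction, where the content lies.

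Setting $K:=\ker\rho\lhd A$, the aim is to conclude $K=1$ from the two faithfulness hypotheses. The first step is that faithfulness of $\rho|_B$ gives $K\cap B=1$. Since $K$ and $B$ are both normal in $A$, the commutator $[K,B]$ lies inside $K\cap B$, hence is trivial, so $K$ centralizes $B$ and thus $K\subseteq C_A(B)$. At that point the whole argument reduces to establishing the inclusion $C_A(B)\subseteq Z(A)$: combined with the above, this forces $K\subseteq Z(A)$, and then faithfulness of $\rho$ on $Z(A)$ yields $K\subseteq Z(A)\cap\ker\rho=1$, as desired.

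The hard part is exactly the inclusion $C_A(B)\subseteq Z(A)$; this is where the hypothesis that $A$ is specifically a braid group (rather than an arbitrary group) actually enters, everything preceding it being a formal commutator manipulation valid in any group. My plan for this step is to pick some non-central element $b\in B$ (which exists because $B$ is non-central) and to observe that, by normality of $B$, every $x\in C_A(B)$ must commute not only with $b$ but with every conjugate $aba^{-1}$ simultaneously. Combining this with the known description of centralizers of non-central elements in the braid group -- the structural input that ultimately powers the theorem of Long quoted from \cite{lng} -- rules out any $x\in C_A(B)\setminus Z(A)$. I expect this centralizer analysis to be the one genuinely non-trivial ingredient, while the commutator trick and the reduction to $Z(A)$ are bookkeeping.
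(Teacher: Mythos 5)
First, a remark on context: the paper does not prove this lemma at all --- it is imported verbatim as Theorem 2.2 of Long's paper \cite{lng} --- so there is no in-paper argument to compare yours against. Judged on its own terms, your reduction is exactly the standard one (and the one underlying Long's result): with $K=\ker\rho$, both $K$ and $B$ are normal, so $[K,B]\subseteq K\cap B=1$, hence $K\subseteq C_A(B)$; if one knows $C_A(B)\subseteq Z(A)$, then faithfulness on $Z(A)$ forces $K=1$. That bookkeeping, and your identification of $C_A(B)\subseteq Z(A)$ as the only place the braid-group hypothesis enters, are both correct.

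The genuine gap is precisely that step, and the route you sketch for it would not work as stated. The ``known description of centralizers of non-central elements'' of a braid group does not say such centralizers are small: the centralizer of a single non-central element can be very large (for instance $C_{B_n}(\sigma_1)$ contains $\sigma_3,\dots,\sigma_{n-1}$, since generators with indices differing by at least $2$ commute). The Nielsen--Thurston classification gives a small centralizer (virtually cyclic modulo the center) only for \emph{pseudo-Anosov} elements. So the real content of the inclusion $C_A(B)\subseteq Z(A)$ is twofold: (i) a non-central normal subgroup of $B_n$, viewed inside the mapping class group of the punctured disk, must contain a pseudo-Anosov element; and (ii) by normality it contains pseudo-Anosov elements with distinct invariant foliations, so that anything centralizing all of $B$ lies in the intersection of their centralizers, which is $Z(A)$. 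Neither point is a formal consequence of ``$x$ commutes with every conjugate $aba^{-1}$ simultaneously''; they are the Thurston-theoretic input of \cite{lng}. Your plan correctly locates the difficulty but does not supply the ingredient that resolves it, so the backward direction is not yet established. (A minor further quibble: the forward direction also needs the \emph{existence} of a non-trivial, non-central normal subgroup, which holds for $n\geq 3$, e.g.\ $B=P_n$, but fails for $B_2\cong\mathbb{Z}$.)
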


\noindent In \cite{knudson}, Knudson uses the restriction to a free subgroup to show the Gassner representation is faithful if
$$\Gamma^iF_{n-1}/\Gamma^{i+1}F_{n-1}\to\Gamma^ig_n(F_{n-1})/\Gamma^{i+1}g_n(F_{n-1})$$ 
is injective for all $i$ (here $g_n$ is the restriction of Gassner to $F_{n-1}$). However, the injectivity fails for $i=5$. Nevertheless, he shows the kernel is contained in $[\Gamma^3F_{n-1},\Gamma^2F_{n-1}]$. 
\\
\\
While the result given in this section is not novel, the proof requires only the computation of a determinant, which is simple for Colored-Burau matrices. This allows us to ultimately circumvent the computations in the lower central series of $F_{n-1}$ by performing linear algebraic operations on the matrices for the pure braid generators directly. We require only the following result about centers of braid groups. 

\begin{lemma}\label{lem:zpn=zbn}
	For $n\geq 3$, $Z(B_n)=Z(P_n)$.
\end{lemma}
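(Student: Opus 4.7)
My plan is to prove the two inclusions separately. The easier one, $Z(B_n)\subseteq Z(P_n)$, I would handle as follows: the image of $Z(B_n)$ under the surjection $\pi:B_n\twoheadrightarrow S_n$ is central in $S_n$; but for $n\geq 3$ the center of $S_n$ is trivial, so $Z(B_n)\subseteq\ker\pi=P_n$. Any element of $Z(B_n)$ thus lies in $P_n$ and commutes with every element of $B_n$, hence in particular with every element of $P_n$, so it belongs to $Z(P_n)$. This step does not require any detailed information about what $Z(B_n)$ actually is.

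The reverse inclusion $Z(P_n)\subseteq Z(B_n)$ is where the real content lies. I would invoke Chow's classical calculation that, for $n\geq 3$, $Z(P_n)$ is infinite cyclic, generated by the full twist $\Delta^2 := (\sigma_1\sigma_2\cdots\sigma_{n-1})^n$. Given $z\in Z(P_n)$, I would write $z=\Delta^{2k}$; since $\Delta^2$ is well-known (also due to Chow) to be central in all of $B_n$, it commutes with every $\sigma_i$, and so does any power $\Delta^{2k}$, yielding $z\in Z(B_n)$. Alternatively, one can frame this as: conjugation by each $\sigma_i$ is an automorphism of the normal subgroup $P_n$ and so stabilizes $Z(P_n)$ setwise; once $Z(P_n)=\langle\Delta^2\rangle$ is known, combining this with the fact that $\sigma_i$ already commutes with $\Delta^2$ forces the conjugation action on $Z(P_n)$ to be trivial.

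The main obstacle is the input $Z(P_n)=\langle\Delta^2\rangle$. If one wanted to avoid citing Chow, the Colored-Burau formulas of \Cref{lem:cb-xpl} could in principle be leveraged: the condition that $cb(z)$ commute with every $cb(A_{1,j})$ for $2\leq j\leq n$ translates into a rigid linear-algebraic constraint, from which (together with some faithfulness input on $cb$ restricted to the putative center) one could hope to read off directly that $z$ is a power of $\Delta^2$. That derivation would be significantly longer than the citation-based route, which is the one I would actually use in the write-up.
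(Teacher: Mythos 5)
Your proof is correct and follows essentially the same route as the paper: the substantive inclusion $Z(P_n)\subseteq Z(B_n)$ is obtained by citing the classical identification of the center with the infinite cyclic group generated by the full twist (the paper cites Birman's Corollary 1.8.4, you cite Chow --- the same fact). Your treatment of the easy inclusion is in fact slightly more careful than the paper's, since you explicitly verify $Z(B_n)\subseteq P_n$ via the triviality of $Z(S_n)$ for $n\geq 3$ before concluding membership in $Z(P_n)$.
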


\begin{proof}
	Certainly any braid $\beta\in Z(B_n)$ must commute with all pure braids, hence 
	$Z(B_n)\subseteq Z(P_n)$. The reverse containment is a consequence of in \cite[Corollary 1.8.4]{Birm},
	where it is shown that $Z(B_n)$ is generated by the pure braid 
	$\sigma =A_{1,2}(A_{1,3}A_{2,3})\cdots(A_{1,n}\cdots A_{n-1,n})$.
\end{proof}

\begin{theorem}\label{thm:pn-faithful}
	The Gassner representation on $P_n$ is faithful if and only if it is so on
	$F_{n-1}\leq P_n$.
\end{theorem}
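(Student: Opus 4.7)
The plan is to invoke Long's reduction (\Cref{lem:faith-redux}) with ambient group $A=P_n$, normal subgroup $B=F_{n-1}$, and $\rho$ the Gassner representation, which by the identification in \Cref{sec:cb} coincides on $P_n$ with the Colored-Burau map $cb$. The implication ($\Rightarrow$) is immediate from $F_{n-1}\le P_n$. For ($\Leftarrow$), I will verify (i) that $F_{n-1}$ is a nontrivial, non-central, normal subgroup of $P_n$, and (ii) that $\rho$ is faithful on $Z(P_n)$.

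Step (i) will be routine. Normality and nontriviality follow from the split short exact sequence $1\to F_{n-1}\to P_n\to P_{n-1}\to 1$ recalled in \Cref{subsec:cb-xpl}, which exhibits $F_{n-1}$ as the kernel of the strand-forgetting projection. For non-centrality, the case $n=2$ is trivial since then $P_2=F_1$; for $n\ge 3$ the group $F_{n-1}$ is free of rank $n-1\ge 2$, hence non-abelian, so it cannot be contained in the abelian subgroup $Z(P_n)$.

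Step (ii) is where the preceding computations earn their keep. By \Cref{lem:zpn=zbn} we have $Z(P_n)=Z(B_n)$, and this is infinite cyclic, generated by the full twist $\zeta=A_{1,2}(A_{1,3}A_{2,3})\cdots(A_{1,n}\cdots A_{n-1,n})$. Since every pure braid maps to the identity in $S_n$, the semidirect-product multiplication in $G$ reduces to ordinary matrix multiplication on $P_n$, so $\det\circ cb$ is a genuine homomorphism there. Combining \Cref{cor:det} with the observation that each index $k\in\{1,\ldots,n\}$ appears in exactly $n-1$ of the unordered pairs $\{i,j\}$ with $1\le i<j\le n$, I obtain
$$\det\bigl(cb(\zeta^m)\bigr)=\prod_{1\le i<j\le n}(t_it_j)^m=\prod_{k=1}^n t_k^{m(n-1)},$$
which is a nontrivial Laurent monomial in $L_n$ for every $m\ne 0$ (using $n\ge 2$). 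Hence $cb(\zeta^m)\ne I_n$ whenever $m\ne 0$, so $\rho$ is injective on the cyclic group $Z(P_n)$.

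Assembling (i) and (ii) and plugging into \Cref{lem:faith-redux} closes the argument. The only conceivable obstacle is (ii), faithfulness on the center, but the determinant formula of \Cref{cor:det} collapses it into the one-line calculation above; this is precisely the structural payoff of passing to the Colored-Burau description advertised in the introduction.
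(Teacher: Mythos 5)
Your proof is correct and follows essentially the same route as the paper: Long's reduction applied to the normal free subgroup $F_{n-1}$, with faithfulness on the center verified via \Cref{lem:zpn=zbn} and the determinant formula of \Cref{cor:det}. The extra details you supply (non-centrality of $F_{n-1}$ via non-abelianness, and the explicit remark that $cb$ restricted to $P_n$ is an honest matrix homomorphism) are correct and only make the argument more complete.
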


\begin{proof}
	Certainly the Gassner representation is faithful on $F_{n-1}$ if it is so on $P_n$. Thus, it suffices to show the converse, which we do using \Cref{lem:faith-redux}. Since $F_{n-1}$ is normal, being that it is the kernel of a homomorphism in the split exact sequence
		$$1\to F_{n-1}\to P_n\twoheadrightarrow P_{n-1}\to 1,$$
	and because $F_{n-1}$ is non-trivial and non-central in $P_n$, we need only prove that the Gassner representation is faithful on $Z(P_n)$.
	\\
	\\
	Since any braid in $Z(P_n)$ must have an image that is central in $GL_n(L_n)$, the image must be scalar. Using \Cref{lem:zpn=zbn} and Corollary 2.3, we obtain
		$$\det(cb(\sigma))=\prod_{j=2}^nt_j^{j-1}\prod_{i=1}^{j}t_i=\prod_{i=1}^nt_i^{n-1},$$
	so that $cb(\sigma)$ has infinite order. Since $Z(B_n)=Z(P_n)$ and $\sigma$ generates the former, $CB$ must be faithful on the latter. By \Cref{lem:cb-xpl}, $CB$ is the same as the Gassner representation and this completes the proof.
\end{proof}

\section{The Main Result}\label{sec:main}
Now we show that the images of the Pure Braid generators $cb(A_{1j})$ are pairwise free. The proof uses the Ping-Pong Lemma, which we record here for reference. A proof of this can be found, for example, in \cite{geo}.

\begin{lemma}[Ping-Pong]\label{lem:pp}
	Let $G$ be a group acting on a set $X$ and let $H_1,H_2\leq G$ be subgroups. Let $|H_i|>2$ for some $i$. If $X_1,X_2\subseteq X$ are disjoint and nonempty and $H_1(X_2)\subseteq X_1$ and $H_2(X_1)\subseteq X_2$, then $G\cong H_1\ast H_2$.
\end{lemma}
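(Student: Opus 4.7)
The plan is to prove injectivity of the canonical homomorphism $\psi:H_1*H_2\to G$, whose image is the subgroup $\langle H_1,H_2\rangle$. Every non-identity element of $H_1*H_2$ admits a unique reduced expression $w=g_1g_2\cdots g_n$ with $g_i\in (H_1\cup H_2)\setminus\{e\}$ and consecutive $g_i$ drawn from different subgroups, and I need to exhibit a point $x\in X$ for which $\psi(w)\cdot x\neq x$. The case $n=1$ is immediate, since $g_1$ already lies in $H_i\setminus\{e\}\subseteq G\setminus\{e\}$.

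Suppose next that $g_1$ and $g_n$ lie in the same subgroup — say both in $H_1$, which forces $n$ to be odd. Pick any $x\in X_2$, which is nonempty by hypothesis, and apply the letters of $w$ one at a time from right to left. The ping-pong hypothesis guarantees that each non-identity element of $H_1$ sends $X_2$ into $X_1$ and each non-identity element of $H_2$ sends $X_1$ into $X_2$, so the running image alternates between $X_2$ and $X_1$. Because $n$ is odd, the final result $\psi(w)\cdot x$ lands in $X_1$, which is disjoint from $X_2\ni x$. The subcase in which both endpoints lie in $H_2$ is identical after swapping the roles of the two sets.

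The only genuinely interesting case is when $g_1$ and $g_n$ lie in different subgroups; here I would reduce to the previous case by conjugation, and this is where the hypothesis $|H_i|>2$ enters. Without loss of generality suppose $|H_1|\geq 3$ and $g_1\in H_1$, $g_n\in H_2$. Choose $h\in H_1\setminus\{e,g_1^{-1}\}$, which is possible because $|H_1|\geq 3$ permits avoiding two elements. Then $w':=hwh^{-1}=(hg_1)g_2\cdots g_n h^{-1}$ has leftmost letter $hg_1\in H_1\setminus\{e\}$ and rightmost letter $h^{-1}\in H_1\setminus\{e\}$, and alternation is preserved in the interior, so the previous paragraph applies to $w'$ and yields $\psi(w')\neq e$, hence $\psi(w)\neq e$. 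If instead $|H_1|=2$ then $|H_2|\geq 3$, and we conjugate by $k\in H_2\setminus\{e,g_n\}$ on the other side to produce a word that begins and ends in $H_2$.

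The main delicacy is precisely this conjugation bookkeeping: one has to choose the conjugating element so that no cancellation occurs against the outer letters of $w$ and the resulting word remains reduced and alternating, which is exactly what the $|H_i|>2$ hypothesis allows. Once that choice is justified, the remainder of the argument is purely set-theoretic ping-pong, with no further obstacles.
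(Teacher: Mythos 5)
Your argument is correct and is the standard textbook proof of the Ping-Pong Lemma (alternation for reduced words with matching endpoints, then the conjugation trick using $|H_i|>2$ for the mixed case); the paper itself gives no proof, deferring to the reference \cite{geo}, so there is nothing to compare against. The only points worth noting are cosmetic: you rightly read the conclusion as $\langle H_1,H_2\rangle\cong H_1\ast H_2$ rather than the literal ``$G\cong H_1\ast H_2$'' of the statement, and your ``without loss of generality'' for the endpoint configuration $g_1\in H_1,\ g_n\in H_2$ is justified by passing to $w^{-1}$, which you could state explicitly.
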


\noindent With the Ping-Pong Lemma established, we shall now use it to prove that the matrices $\{cb(A_{1,j})\}_{j=2}^n$ are pairwise free. Firstly, we observe that, for $j\neq j'$, any relation held by $cb(A_{1,j}),cb(A_{1,j'})$ must hold for all tuples $T=(t_1,\hdots,t_n)$. Thus, if we can find a single tuple $T=(\tau_1,\hdots,\tau_n)$ for which the resulting matrices are free, then $cb(A_{1,j})$ and $cb(A_{1,j'})$ must be free as well. Secondly, the freeness is independent of basis: if
	$$\prod_{k=1}^N cb(A_{1,i_k})^{\alpha_k}=I_n$$
where $i_k\in \{j,j'\}$ for all $k$, then the change of basis matrix $P$ gives
	$$\prod_{k=1}^N Pcb(A_{1,i})^{\alpha_k}P^{-1}=P\left(\prod_{k=1}^N cb(A_{1,i_k})^{\alpha_k}\right)P^{-1}=PP^{-1}=I_n.$$
Thus, we shall evaluate $cb(A_{1,j}),cb(A_{1,j'})$ and let the resulting matrices in $GL_n(\mathbb{Z})$ act on $\mathbb{R}^n$ after a change of basis that reduces the action on a fixed subspace (depending on $j,j'$) to that of the following free matrices.

\begin{lemma}\label{lem:sl2-Z}
	Let $G=SL_2(\mathbb{Z})$ and 
	$$A=\begin{bmatrix}
		1&2\\
		0&1
	\end{bmatrix} \qquad 
	B=\begin{bmatrix}
		1&0\\
		2&1
	\end{bmatrix}$$
	Then $\langle A,B\rangle \cong F_2$ is free.
\end{lemma}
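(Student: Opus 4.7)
The plan is a direct application of the Ping-Pong Lemma (\Cref{lem:pp}) to the natural action of $SL_2(\mathbb{Z})$ on $\mathbb{R}^2$, with $H_1=\langle A\rangle$ and $H_2=\langle B\rangle$. Both subgroups are infinite cyclic (since $A$ and $B$ are unipotent of infinite order), so the cardinality hypothesis $|H_i|>2$ is automatic.

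First I would choose the two ping-pong sets to be the "horizontally dominant" and "vertically dominant" open cones
\begin{equation*}
X_1=\{(x,y)^T\in\mathbb{R}^2:|x|>|y|\},\qquad X_2=\{(x,y)^T\in\mathbb{R}^2:|y|>|x|\},
\end{equation*}
which are clearly disjoint and nonempty. Then I would compute the powers explicitly: for any nonzero integer $k$,
\begin{equation*}
A^k=\begin{bmatrix}1&2k\\0&1\end{bmatrix},\qquad B^k=\begin{bmatrix}1&0\\2k&1\end{bmatrix}.
\end{equation*}

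Next I would verify the two swapping inclusions. Given $(x,y)^T\in X_2$ (so $|y|>|x|$) and $k\neq 0$, the first coordinate of $A^k(x,y)^T$ is $x+2ky$, and
\begin{equation*}
|x+2ky|\ge 2|k||y|-|x|\ge 2|y|-|x|>|y|,
\end{equation*}
so $A^k(x,y)^T\in X_1$; hence $H_1(X_2)\subseteq X_1$. The symmetric estimate for $B^k$ acting on $X_1$ shows $H_2(X_1)\subseteq X_2$.

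With both hypotheses of \Cref{lem:pp} verified, we conclude $\langle A,B\rangle\cong\langle A\rangle*\langle B\rangle\cong\mathbb{Z}*\mathbb{Z}=F_2$. There is no real obstacle here: the only subtlety is picking the right invariant cones so that the shear matrices $A^k$ and $B^k$ genuinely interchange them, and the factor of $2$ in the off-diagonal entries is precisely what makes the elementary inequality $2|k||y|-|x|>|y|$ hold for all nonzero $k$ on $X_2$ (and symmetrically on $X_1$). This is the standard reason one must use $2$ rather than $1$ in these matrices.
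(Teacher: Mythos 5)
Your proposal is correct and follows essentially the same route as the paper: the Ping-Pong Lemma applied to the standard action on $\mathbb{R}^2$ with the cones $\{|x|>|y|\}$ and $\{|x|<|y|\}$ and the same elementary inequality $|x+2ky|\ge 2|k||y|-|x|>|y|$. Your explicit restriction to $k\neq 0$ is the correct reading of the Ping-Pong hypothesis (nonidentity elements), which the paper leaves implicit.
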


\begin{proof}
	We use the Ping-Pong Lemma. Let 
	$$H_1=\{A^n: n\in \mathbb{Z}\}=\left\lbrace \begin{bmatrix}1&2n\\0&1\end{bmatrix}: n\in\mathbb{Z}\right\rbrace \qquad \text{and} \qquad H_2=\{B^n: n\in \mathbb{Z}\}=\left\lbrace \begin{bmatrix}1&0\\2n&1\end{bmatrix}: n\in\mathbb{Z}\right\rbrace .$$
	
	Consider the standard action of $G$ on $X=\mathbb{R}^2$ by linear transformations. Set 
	$$X_1=\left\lbrace \begin{bmatrix} x\\y \end{bmatrix} \in \mathbb{R}^2 : |x|>|y|\right\rbrace 
	\qquad \text{and}\qquad X_2=\left\lbrace \begin{bmatrix} x\\y \end{bmatrix}\in\mathbb{R}^2: |x|<|y|\right\rbrace$$
	
	\noindent Now, if ${\bf v}=\begin{bmatrix} x\\y \end{bmatrix}\in X_2$, then $A^n{\bf v}=\begin{bmatrix} x+2ny\\y\end{bmatrix}\in X_1$ since
		$$|x+2ny|\geq |2ny|-|x|\geq 2|y|-|x|>2|y|-|y|=|y|.$$
	Likewise, if ${\bf v}=\begin{bmatrix} x\\y \end{bmatrix}\in X_1$, then $B^n{\bf v}=\begin{bmatrix} x\\2nx+y\end{bmatrix}\in X_2$ since
		$$|2nx+y|\geq |2nx|-|y|\geq 2|x|-|y|>2|x|-|x|=|x|.$$
\end{proof}

\noindent These are the same matrices that appear in blocks in the representation given in \cite{lip} after evaluating at $t_i=-1$ for all $i$. In order to describe the change of basis explicitly, we evaluate the matrices $cb(A_{1,j})$ and $cb(A_{1,j'})$, and compute their eigenspaces. The new basis is then given in terms of the eigenvectors.

\begin{lemma}\label{lem:eigen}
	Let $n\geq 4$. Denote by $\{e_k\}_{k=1}^n$ the standard basis for $\mathbb{R}^n$. Define $\tau$ to be the evaluation homomorphism given by taking $t_i=-1$ for all $1\leq i \leq n$. Then the only eigenvalue for $\tau(cb(A_{1,j}))$ is $\lambda=1$ and the eigenvectors are
	\begin{itemize}
		\item for $j=2$: $e_k$ for $k\neq 2$.
		\item for $j\geq 3$: $e_k$ for $k\notin\{1,j-1,j\}$, $v_j=e_1+e_{j-1}$, and $w_j=e_1+e_j$.
	\end{itemize}
				
\end{lemma}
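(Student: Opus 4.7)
The plan is to compute $M := \tau(cb(A_{1,j}))$ directly from the closed form in \Cref{lem:cb-xpl}. Setting $i = 1$ (which removes the $c_{i-1}$ term) and substituting $t_k = -1$ for all $k$ yields $t_j - 1 = -2$, $t_1 t_j - t_j = 2$, and $1 - t_1 = 2$, so
$$M - I_n \;=\; c_1(-2u) \;+\; c_{j-1}(2u) \;+\; c_j(2u),$$
where $u := \sum_{k=1}^{j-1} e_k$ is the indicator vector of $\{1, \ldots, j-1\}$. The crucial structural observation is that every nonzero column of $M - I_n$ is a scalar multiple of the same vector $u$, hence $\operatorname{rank}(M - I_n) = 1$ and $\dim \ker(M - I_n) = n - 1$. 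When $j = 2$, columns $1$ and $j-1$ coincide and the contributions $-2e_1$ and $2e_1$ cancel, so $M - I_n$ is supported in column $2$ only (with a $2$ in row $1$); when $j \geq 3$, columns $1$, $j-1$, $j$ of $M - I_n$ are $-2u$, $2u$, $2u$ respectively, and all other columns vanish.

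With $\ker(M - I_n)$ known to be $(n-1)$-dimensional, I would verify that the vectors listed in the statement form a basis for it. For $j = 2$, $\{e_k : k \neq 2\}$ is clearly a basis of the kernel of $c_2(2 e_1)$. For $j \geq 3$, each $e_k$ with $k \notin \{1, j-1, j\}$ is annihilated because only columns $1$, $j-1$, $j$ of $M - I_n$ are nonzero; the vector $v_j = e_1 + e_j$ contributes $(-2u) + (2u) = 0$; and $w_j = u$ contributes $(-2u) + (2u) = 0$ from its nonzero components in columns $1$ and $j-1$. Linear independence of these $(n-3) + 2 = n-1$ vectors is immediate: the coordinates outside $\{1, j-1, j\}$ isolate the $e_k$'s, the coordinate $j$ isolates $v_j$, and the coordinate $j-1$ isolates $w_j$.

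Finally, to confirm that $1$ is the only eigenvalue, I would use a trace argument. Reading the diagonal off the formula gives $M_{1,1} = -1$, $M_{j-1,j-1} = 3$, and $M_{k,k} = 1$ otherwise, so $\operatorname{tr}(M) = -1 + (j-3) + 3 + 1 + (n-j) = n$. Since $\lambda = 1$ has algebraic multiplicity at least $n-1$ and the eigenvalues sum to $n$, the remaining eigenvalue equals $n - (n-1) = 1$; hence $1$ is the unique eigenvalue. The $j = 2$ case is even simpler, as $M$ is then upper-triangular with ones on the diagonal. I do not anticipate a serious obstacle: the argument is purely linear-algebraic once the formula of \Cref{lem:cb-xpl} has been evaluated, with the only mild subtlety being the column collision $j-1 = 1$ when $j = 2$, which is handled separately.
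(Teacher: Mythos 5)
Your proposal is correct and follows the same route the paper intends: the paper's proof is simply ``immediate consequence of Lemma~\ref{lem:cb-xpl},'' and your argument is exactly the fleshed-out version of that — evaluate the closed form at $t_i=-1$, observe that $M-I_n$ has rank one with all nonzero columns proportional to $u=\sum_{k=1}^{j-1}e_k$, identify the $(n-1)$-dimensional kernel with the listed vectors, and rule out a second eigenvalue (your trace computation does this cleanly, and correctly handles the $j=2$ column collision). No gaps.
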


\begin{proof}
	This is an immediate consequence of \Cref{lem:cb-xpl}.
\end{proof}

\begin{theorem}\label{thm:main-thm}
	Let $n\geq 4$. For all $2\leq j<j'\leq n$, the group 
	$\langle cb(A_{1,j}),cb(A_{1,j'})\rangle \cong F_2$ is free.
\end{theorem}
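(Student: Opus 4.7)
The plan is to leverage the two reductions announced just before Lemma~\ref{lem:sl2-Z}: specialize $t_i=-1$ to pass to explicit integer matrices $M_j:=\tau(cb(A_{1,j}))$, and then exhibit a $2$-dimensional subspace $V\subset\mathbb{R}^n$, invariant under both $M_j$ and $M_{j'}$, on which the restrictions visibly generate a free group of rank $2$ via Lemma~\ref{lem:sl2-Z}.

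First, I would read off from Lemma~\ref{lem:cb-xpl} (with $t_i=-1$) that every column of $M_j-I_n$ is a scalar multiple of the vector $w_j=e_1+\cdots+e_{j-1}$ from Lemma~\ref{lem:eigen}. Thus $M_j-I_n$ has rank at most one, so there is a linear functional $\phi_j$ on $\mathbb{R}^n$ with
$$M_j v = v + \phi_j(v)\,w_j.$$
The only non-trivial columns of $M_j-I_n$ are those indexed $1$, $j-1$, $j$, and tabulating $\phi_j$ on the standard basis from these three columns yields the key identities $\phi_j(w_j)=0$ and, for any $j<j'$, $\phi_j(w_{j'})=2$ together with $\phi_{j'}(w_j)=-2$. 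All three reduce to checking which of the coordinate positions $1,j-1,j$ lie in the support $\{1,\ldots,j'-1\}$ of $w_{j'}$ (and vice versa).

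Next, take $V=\mathrm{span}\{w_j,w_{j'}\}$, a $2$-plane since $w_j$ and $w_{j'}$ have different supports. Invariance of $V$ under $M_j$ and $M_{j'}$ is immediate from $(M_k-I_n)v\in\mathbb{R}\,w_k\subset V$, and in the ordered basis $\{w_j,w_{j'}\}$ the restrictions compute to
$$M_j|_V=\begin{bmatrix}1 & 2\\ 0 & 1\end{bmatrix}=A,\qquad M_{j'}|_V=\begin{bmatrix}1 & 0\\ -2 & 1\end{bmatrix}=B^{-1}.$$
By Lemma~\ref{lem:sl2-Z}, $\langle A,B^{-1}\rangle\cong F_2$, so any non-trivial word in $M_j$ and $M_{j'}$ restricts to a non-identity matrix on $V$, hence is non-identity on $\mathbb{R}^n$. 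A relation among $cb(A_{1,j})$ and $cb(A_{1,j'})$ in $GL_n(L_n)$ would persist under $\tau$, so we conclude $\langle cb(A_{1,j}),cb(A_{1,j'})\rangle\cong F_2$.

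The main obstacle is bookkeeping rather than conceptual: one must verify the rank-one structure and the three sign computations $\phi_j(w_j)=0$, $\phi_j(w_{j'})=2$, $\phi_{j'}(w_j)=-2$ uniformly in $j,j'$, including the degenerate case $j=2$, where the $c_{i-1}$ term of Lemma~\ref{lem:cb-xpl} is absent and the $c_i$ and $c_{j-1}$ columns collide in column $1$ and cancel. Once those identities are in hand the ping-pong is immediate via Lemma~\ref{lem:sl2-Z}, and no further case analysis or norm estimates are required.
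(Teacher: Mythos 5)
Your proof is correct, and it follows the same overall strategy as the paper: specialize at $t_i=-1$ and reduce to the free pair of Lemma~\ref{lem:sl2-Z} acting on a two-dimensional piece. The difference is in execution, and yours is arguably cleaner. The paper performs an explicit change of basis (replacing $e_{j'-1}$ by $v_{j'}$, or $e_{j-1},e_{j}$ by $w_j,w_{j'}$) and reads off a $2\times 2$ block, splitting into three cases and, for $(j,j')=(2,3)$, falling back on the faithfulness of the Burau representation on $B_3$ rather than on Lemma~\ref{lem:sl2-Z}. You instead observe from \Cref{lem:cb-xpl} that $M_j-I_n$ is rank one with image $\mathbb{R}w_j$, so $M_j=I_n+w_j\phi_j$ is a transvection-like operator; the identities $\phi_j(w_j)=0$, $\phi_j(w_{j'})=2$, $\phi_{j'}(w_j)=-2$ (which I have checked against the columns of \Cref{lem:cb-xpl}, including the cancellation in column $1$ when $j=2$) then give a single invariant plane $V=\mathrm{span}\{w_j,w_{j'}\}$ on which the restrictions are exactly $A$ and $B^{-1}$, uniformly in $j<j'$. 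This buys you a case-free argument that also covers $(j,j')=(2,3)$ without importing the $B_3$ Burau result, at the cost of a few sign verifications; the paper's version makes the block structure visually explicit but needs the extra citation and case analysis. The final steps — passing from a relation in $GL_n(L_n)$ through $\tau$ and then restricting to the invariant subspace — are sound, since $\{A,B^{-1}\}$ is again a free basis of $\langle A,B\rangle\cong F_2$.
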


\begin{proof}
For ease of notation, let $M_j:=\tau(cb(A_{1,j}))$,  where $\tau$ is the evaluation homomorphism $t_i=-1$ for all $1\leq i\leq n$ and let $\{e_k\}_{k=1}^n$ be the standard basis for $\mathbb{R}^n$. We begin with the case $j=2$. If $j'=3$, consider $A_{1,2}$ and $A_{1,3}$ as elements of $B_3$. Here, the Burau representation is known to be faithful \cite{big-bur}, so the Colored-Burau representation must be likewise. Consequently, evaluating gives that $M_2$ and $M_3$ are free. Now assume that $j'>3$. By \Cref{lem:eigen}, we can make the following change of basis by replacing $e_{j'-1}$ with $w_{j'-1}=e_1+e_{j'-1}$. We thereby obtain the following blocks in the upper left $2\times 2$ minors

$$X=\begin{bmatrix}
1 & 2\\
0 & 1
\end{bmatrix} \qquad 
Y=\begin{bmatrix}
1 & 0\\
-2 & 1
\end{bmatrix},$$
for $M_2$ and $M_{j'}$, respectively. The matrices $X$ and $Y^{-1}$ are those from \Cref{lem:sl2-Z}. Thus, $M_2$ and $M_{j'}$ cannot have a relation as this would induce a relation on $X$ and $Y^{-1}$.
\\
\\
If $j>2$, we may simultaneously replace $e_{j-1}$ with 
$v_j$ and $e_j$ with $\sum\limits_{i=1}^{j-1}e_i$ to obtain 
$$
\begin{bmatrix}
	* & 0 & *\\
	* & X & *\\
	* & 0 & *\\
\end{bmatrix}
\qquad \text{and} \qquad
\begin{bmatrix}
	* & 0 & *\\
	* & Y & *\\
	* & 0 & *\\
\end{bmatrix}
$$
for $M_j$ and $M_{j'}$, respectively, in the $2\times 2$ minor with the upper left entry at position $(j-1,j-1)$. The same argument from the preceding paragraph shows that $M_j$ and $M_{j'}$ must not generate a relation, hence are free.
\end{proof}

\section{Future Research}
For $n\geq 4$, the faithfulness of the Gassner representation is unknown. In this section we shall discuss the application of the main result in this paper to the special case $n=4$. In this case, we have 
$F_3\cong \langle A_{1,2},A_{1,3},A_{1,4}\rangle$. Theorem 4.4 shows that the images of any two of these generate the free group $F_2$. However, it is unclear whether the three images generate $F_3$. In particular, we have
\pagebreak
	$$cb(A_{1,2})=
	\begin{bmatrix}
	t_1t_2 & 1-t_1 & 0 & 0 \\
	0 & 1 & 0 & 0\\
	0 & 0 & 1 & 0\\
	0 & 0 & 0 & 1
	\end{bmatrix}$$
	
	$$cb(A_{1,3})=
	\begin{bmatrix}
	t_3 & t_3(t_1-1) & (1-t_1) & 0 \\
	t_3-1 & 1+t_3(t_1-1) & (1-t_1) & 0\\
	0 & 0 & 1 & 0\\
	0 & 0 & 0 & 1
	\end{bmatrix}$$

	$$cb(A_{1,4})=
	\begin{bmatrix}
	t_4 & 0 & t_4(t_1-1) & (1-t_1) \\
	t_4-1 & 1 & t_4(t_1-1) & (1-t_1)\\
	t_4-1 & 0 & 1+t_4(t_1-1) & (1-t_1)\\
	0 & 0 & 0 & 1
	\end{bmatrix}$$ 
	
	\noindent 
	For example, when $j=2$ and $j'=4$ taking $t_i=-1$ and changing to the basis $\{e_1,e_2,e_1+e_3,e_4\}$, we obtain the matrices 
	$$M_2:=\tau(cb(A_{1,2}))=
	\begin{bmatrix}
	1&2&0&0\\
	0&1&0&0\\
	0&0&1&0\\
	0&0&0&1
	\end{bmatrix}
	\qquad
	M_4:=\tau(cb(A_{1,4}))=
	\begin{bmatrix}
	1&0&1&0\\
	-2&1&0&2\\
	-2&0&1&2\\
	0&0&0&1
	\end{bmatrix},
	$$
	which are free by \Cref{lem:sl2-Z}. Likewise, changing to the basis $\{e_1,e_1+e_2,e_1+e_2+e_3,e_4\}$ and evaluating at $t_i=-1$, we can see that 
	$$M_3:=\tau(cb(A_{1,3}))=
	\begin{bmatrix}
	1&0&0&0\\
	-2&1&2&0\\
	0&0&1&0\\
	0&0&0&1
	\end{bmatrix}
	\qquad
	M_4:=\tau(cb(A_{1,4}))=
	\begin{bmatrix}
	1&0&0&0\\
	0&1&0&0\\
	-2&-2&1&2\\
	0&0&0&1
	\end{bmatrix}
	$$ 
	are free by \Cref{lem:sl2-Z}.
	\\
	\\
	However, the fact that each of these pairs is free does not imply that the set of three is free. In fact, using the basis
	basis $\{b_1,b_2,b_3,b_4\}$, where $b_k=\sum_{i=1}^k e_i$, we obtain
	$$M_2=
	\begin{bmatrix}
	1 & 2 & 2 & 2 \\
	0 & 1 & 0 & 0\\
	0 & 0 & 1 & 0\\
	0 & 0 & 0 & 1
	\end{bmatrix}
	\qquad 
	M_3=
	\begin{bmatrix}
	1& 0 & 0 & 0 \\
	-2 & 1 & 2 & 2\\
	0 & 0 & 1 & 0\\
	0 & 0 & 0 & 1
	\end{bmatrix}
	\qquad
	M_4=
	\begin{bmatrix}
	1 & 0 & 0 & 0 \\
	0 & 1 & 0 & 0\\
	-2 & -2 & 1 & 2\\
	0 & 0 & 0 & 1
	\end{bmatrix}$$
	and it is still clear these are pairwise-free, but the reader can verify that the following relation holds:
		$$M_2M_3M_2M_4M_2^{-1}M_3^{-1}M_2^{-1}M_3^{-1}M_4^{-1}M_3 = I_4$$
	The same word in the Burau matrices is not the identity, but this relation appears in the evaluated matrices as a result of the fact that $t=-1$ is a root of certain polynomials appearing in the matrices.
	\\
	\\
	It is not evident to the author that the Ping-Pong Lemma is a sufficient tool to completely resolve the question of faithfulness even in this case. Similar approaches have been taken in \cite{ber-big-tra} and \cite{abdul-jin} regarding the faithfulness of the Burau representation for $n=4$. While the generators used in these papers are different, the authors have managed to show that the cubes and squares, respectively, of the images are free, rather than the images themselves.

\subsection*{Data Availability}
Data sharing not applicable to this article as no datasets were generated or analysed during the current study.

\subsection*{Compliance with Ethical Standards}
The author has no financial or proprietary interests in any material discussed in this article.

\bibliography{bib}{}

\begin{thebibliography}{10}

\bibitem{geo}
{\em Office Hours with a Geometric Group Theorist}.
\newblock Princeton University Press, 2017.

\bibitem{phdthesis}
Mohammad~N. Abdulrahim.
\newblock {\em The Gassner Representation of the Pure Braid Group}.
\newblock PhD thesis, The Pennsylvania State University, 1995.

\bibitem{aagl}
Iris Anshel, Michael Anshel, Dorian Goldfeld, and Stephane Lemieux.
\newblock Key agreement, the algebraic eraser, and lightweight cryptography.
\newblock {\em Contemp. Math.}, 418:1–34, 2006.

\bibitem{ber-big-tra}
A.~Beridze, S.~Bigelow, and P.~Traczyk.
\newblock On the burau representation of $b_4$ modulo $p$.
\newblock 2019.

\bibitem{ber-tra}
A.~Beridze and P.~Traczyk.
\newblock Forks, noodles and the burau representation for $b_4$.
\newblock {\em Transactions of A. Razmadze Mathematical Institute},
  172(3):337--353, Dec 2018.

\bibitem{big-bur}
Stephen Bigelow.
\newblock The {B}urau representation is not faithful for {$n=5$}.
\newblock {\em Geom. Topol.}, 3:397--404, 1999.

\bibitem{Birm}
Joan~S. Birman.
\newblock {\em Braids, Links, and Mapping Class Groups. (AM-82)}.
\newblock Princeton University Press.

\bibitem{cp}
F.~R. Cohen and Stratos Prassidis.
\newblock On injective homomorphisms for pure braid groups, and associated
  {L}ie algebras.
\newblock {\em J. Algebra}, 298(2):363--370, 2006.

\bibitem{abdul-jin}
Rawia~A. Jinani and Mohammad~N. Abdulrahim.
\newblock On the considerations adopted by breidze and traczyk towards the
  faithfulness of burau representation for $n=4$, 2022.

\bibitem{knudson}
Kevin~P. Knudson.
\newblock On the kernel of the {G}assner representation.
\newblock {\em Arch. Math. (Basel)}, 85(2):108--117, 2005.

\bibitem{lip}
Seymour Lipschutz.
\newblock On a finite matrix representation of the braid group.
\newblock {\em Arch. Math. (Basel)}, 12:7--12, 1961.

\bibitem{lng}
D.~D. Long.
\newblock A note on the normal subgroups of mapping class groups.
\newblock {\em Math. Proc. Cambridge Philos. Soc.}, 99(1):79--87, 1986.

\end{thebibliography}
\bibliographystyle{plain}
\addcontentsline{toc}{section}{References}
\Addresses

\end{document}